\theoremstyle{plain}
\newtheorem{definition}{Definition}
\newtheorem{example}{Example}
\newtheorem{proposition}{Proposition}
\numberwithin{equation}{section}
\begin{document}
\title[Some special curves in the unit tangent bundles of surfaces]{Some
special curves in the unit tangent bundles of surfaces}
\author{Murat Altunba\c{s}}
\address{Department of Mathematics \\
Faculty of Sciences and Arts\\
Erzincan Binali Yildirim University\\
24100 Erzincan, Turkey}
\email{maltunbas@erzincan.edu.tr}
\date{}

\begin{abstract}
The aim of this paper is to give some characterizations for $N-$Legendre and 
$N-$slant curves in the unit tangent bundles of surfaces endowed with
natural diagonal lifted structures.

\textbf{Key words: }Unit tangent bundle, natural diagonal structures, slant
curves.

\textbf{2010 Mathematics Subject Classification: }53B21, 53C07, 53C15, 53C25
\end{abstract}

\maketitle

\section{Introduction}

In studies on curves in the unit tangent (sphere) bundles, researchers
generally consider the standard contact metric structure which is obtained
by endowing the bundle with the induced Sasaki metric. For examples, in \cite%
{Berndt} Berndt \textit{et al.} studied the geodesics, in \cite{Inoguchi1}
and \cite{Inoguchi2} Inoguchi and Munteanu investigated the magnetic curves,
in \cite{Hou} Hou and Sun considered the slant geodesics and in \cite%
{Hathout} Hathout \textit{et al. }discussed the $N-$Legendre and $N-$slant
curves of the unit tangent bundles with respect to this metric structure.
However, some other contact metric structures can be defined on the unit
tangent bundles. One of them is introduced by Druta-Romaniuc and Oproiu on
tangent (sphere) bundles and called natural diagonal structure in \cite%
{Opriou2}. In this paper, they found conditions under which the tangent
sphere bundles are Einstein. In their further works, they had conditions
under which the tangent sphere bundles are $\eta -$Einstein and obtained
some results for curvatures of the tangent sphere bundles (see \cite{Opriou1}
and \cite{Opriou3}).

In this paper, $N-$Legendre and $N-$slant curves are studied in the unit
tangent bundles of surfaces with natural diagonal structures and some
results are given when the surface is considered to be a sphere.

\section{Preliminiaries}

In this section, we give a brief introduction to natural diagonal
structures, for further information see \cite{Opriou1}. Let $(M,g)$ be a
smooth $n-$dimensional Riemannian manifold and $\pi :TM\rightarrow M$ be its
tangent bundle. Let $(x^{i},u^{i})_{(i=1,...,n)}$ be the locally coordinate
systems on the tangent bundle $TM$. The natural diagonal lift metric $g^{d}$
is defined as follows: 
\begin{eqnarray}
{}g^{d}({}X^{h},{}Y^{h}) &=&c_{1}{}g(X,Y)+d_{1}g(X,u)g(Y,u),  \label{1} \\
g^{d}({}X^{v},{}Y^{h}) &=&{}g^{d}({}X^{h},{}Y^{v})=0,  \notag \\
g^{d}({}X^{v},{}Y^{v}) &=&{}c_{2}{}g(X,Y)+d_{2}g(X,u)g(Y,u),  \notag
\end{eqnarray}%
for every vector fields $X,Y$ on $M$ and every tangent vector $u,$ where $%
t=g(u,u)/2$ and $c_{1},c_{2},d_{1},d_{2}$ are smooth functions of $t.$ The
conditions for $g^{d}$ to be positive are $c_{1}>0,\ c_{2}>0,\
c_{1}+2td_{1}>0,\ c_{2}+2td_{2}>0$ for every $t\geq 0.\ $Here, $X^{h}=X^{i}%
\frac{\partial }{\partial x^{i}}-X^{i}u^{j}\Gamma _{ij}^{k}\frac{\partial }{%
\partial u^{k}}$ and $X^{v}=X^{i}\frac{\partial }{\partial u^{i}}$ are the
horizontal and the vertical lifts of $X$ at $(x,u)$ with respect to
Levi-Civita connection $\nabla $ of $g$ respectively, where $\left\{ \Gamma
_{ij}^{k}\right\} $ are the Christoffel symbols of $\nabla $.

Let us define an $(1,1)-$tensor field $J$ on $TM$ as follows:%
\begin{eqnarray}
JX^{h} &=&a_{1}X^{v}+b_{1}g(X,u)u^{v},  \label{2} \\
JX^{v} &=&-a_{2}X^{h}-b_{2}g(X,u)u^{h},  \notag
\end{eqnarray}%
for every vector field $X$ on $M$, where $a_{1},a_{2},b_{1},b_{2}$ are
smooth functions of $t.$We note that the $(1,1)-$tensor field $J$ given by
the relations (\ref{2}) defines an almost complex structure on the tangent
bundle if and only if $a_{2}=1/a_{1}$ and $%
b_{2}=-b_{1}/[a_{1}(a_{1}+2tb_{1})]\ $(see \cite{Opriou3}).

We know that the unit tangent bundle $T_{1}M=\{u\in TM:g(u,u)=1\}$ of a
Riemannian manifold $M$ is a $(2n-1)-$dimensional submanifold of $TM.$ The
canonical vector field $u^{v}$ is normal to $T_{1}M.$ The horizontal lift of
any vector field on $M$ is tangent to $T_{1}M$, but the vertical lift is not
always tangent to $T_{1}M.$ The tangential lift of a vector field $X$ of $M$
is defined by $X^{t}=X^{v}-g(X,u)u^{v}.$ Hence, we write the Lie algebra of $%
C^{\infty }$ vector fields on $T_{1}M$ as $\chi
(T_{1}M)=\{X^{h}+Y^{t}:X,Y\in \chi (M)\}$ \cite{Boeckx}. The induced
Riemannian metric $g_{1}^{d}$ on $T_{1}M$ from (\ref{1}) is uniquely
determined by 
\begin{eqnarray}
{}g_{1}^{d}({}X^{h},{}Y^{h}) &=&c_{1}{}g(X,Y)+d_{1}g(X,u)g(Y,u),  \label{3}
\\
g_{1}^{d}({}X^{v},{}Y^{h}) &=&{}g_{1}^{d}({}X^{h},{}Y^{v})=0,  \notag \\
g_{1}^{d}({}X^{v},{}Y^{v}) &=&{}c_{2}{}[g(X,Y)-g(X,u)g(Y,u)],  \notag
\end{eqnarray}%
for every vector fields $X,Y$ on $M$ and every tangent vector $u,$ where $%
c_{1},d_{1},c_{2}$ are constants. The conditions for ${}g_{1}^{d}$ to be
positive are $c_{1}>0,$ $c_{2}>0,$ $c_{1}+d_{1}>0$ \cite{Opriou2}$.$

Remark that the functions $c_{1},d_{1},c_{2}$ become constant, since in the
case of unit tangent bundle, the function $t$ becomes a constant equal to $%
\frac{1}{2}$.

In \cite{Opriou1}, it is proved that there is a contact metric structure $%
(\varphi _{1},\xi _{1},\eta _{1},g_{1})$ on $T_{1}M$ given by%
\begin{eqnarray}
\varphi _{1}(X^{h}) &=&a_{1}X^{t},\text{ }\varphi
_{1}(X^{t})=-a_{2}X^{h}+a_{2}g(X,u)u^{h},  \label{4} \\
\xi _{1} &=&\frac{1}{2\lambda \alpha }u^{h},\text{ }\eta _{1}(X^{t})=0,\text{
}\eta _{1}(X^{h})=2\alpha \lambda g(X,u),\text{ }g_{1}=\alpha g_{1}^{d}, 
\notag
\end{eqnarray}%
for every vector fields $X,Y$ on $M$ and every tangent vector $u,$ where $%
\lambda $ is a scalar, $\alpha =\frac{c_{1}+d_{1}}{4\lambda ^{2}}\ $and $%
a_{1}$ and $a_{2}$ are the functions defined in (\ref{2})$.$ This contact
metric structure is called natural diagonal structure. Furthermore, $%
(T_{1}M,\varphi _{1},\xi _{1},\eta _{1},g_{1})$ is Sasakian if and only if $%
M $ has constant sectional curvature $K=a_{1}^{2}$ \cite{Opriou1}.

The Levi-Civita connection $\nabla _{1}$ of $(T_{1}M,g_{1})$ satisfies the
following relations:%
\begin{eqnarray}
&&  \label{5} \\
\nabla _{1\text{ }X^{h}}Y^{h} &=&(\nabla _{X}Y)^{h}-\frac{1}{2}(R(X,Y)u)^{t}-%
\frac{d_{1}}{2c_{2}}[g(X,u)Y^{t}-g(Y,u)X^{t}],  \notag \\
\nabla _{1\text{ }X^{h}}Y^{t} &=&(\nabla _{X}Y)^{t}-\frac{c_{2}}{2c_{1}}%
(R(Y,u)X)^{h}+\frac{d_{1}}{2c_{1}}g(X,u)Y^{h}+\frac{d_{1}}{2(c_{1}+d_{1})}%
g(X,Y)u^{h}  \notag \\
&&-\frac{d_{1}(2c_{1}+d_{1})}{2c_{1}(c_{1}+d_{1})}g(X,u)g(Y,u)u^{h}-\frac{%
c_{2}d_{1}}{2c_{1}(c_{1}+d_{1})}g(Y,R(X,u)u)u^{h},  \notag \\
\nabla _{1\text{ }X^{t}}Y^{h} &=&-\frac{c_{2}}{2c_{1}}(R(X,u)Y)^{h}+\frac{%
d_{1}}{2c_{1}}g(Y,u)X^{h}+\frac{d_{1}}{2(c_{1}+d_{1})}g(X,Y)u^{h}  \notag \\
&&-\frac{d_{1}(2c_{1}+d_{1})}{2c_{1}(c_{1}+d_{1})}g(X,u)g(Y,u)u^{h}-\frac{%
c_{2}d_{1}}{2c_{1}(c_{1}+d_{1})}g(X,R(Y,u)u)u^{h},  \notag \\
\nabla _{1\text{ }X^{t}}Y^{t} &=&-g(Y,u)X^{t},  \notag
\end{eqnarray}%
for every vector fields $X,Y$ on $M$ and every tangent vector $u,$ where $%
\nabla $ and $R$ denote the Levi-Civita connection and the curvature tensor
of $(M,g),$ respectively \cite{Opriou1}.

\section{$N-$Legendre and $N-$slant curves}

Let $(M,g)$ be a surface and let $\gamma :I\subset 
\mathbb{R}
\rightarrow M$ be a curve on $M.$ Assume that $\tilde{\gamma}(s)=(\gamma
(s),X(s))$ is a curve on $(T_{1}M,g_{1},\varphi _{1},\xi _{1},\eta _{1}),$
where the contact metric structure is given by (\ref{4}). We have four kinds
of curves which are defined below.

\begin{definition}
\label{def1} \cite{Hou} Let $\gamma $ be a curve in an almost contact metric
manifold $(M,g,\varphi ,\xi ,\eta )$. The curve $\gamma $ is called
Legendrian (resp. slant) if the angle between the tangent vector field $T$
of $\gamma $ and $\xi $ is $\frac{\pi }{2}$ (resp. $[0,\pi ]-\{\pi /2\}),$
i.e. $g(T,\xi )=0$ (resp. $g(T,\xi )=c),$ where $c$ is a non-zero constant.
\end{definition}

\begin{definition}
\label{def2} \cite{Hathout} Let $\gamma $ be a curve in an almost contact
metric manifold $(M,g,\varphi ,\xi ,\eta )$. The curve $\gamma $ is called
N-Legendre (resp. N-slant) if the angle between the normal vector field $N$
of $\gamma $ and $\xi $ is $\pi /2$ (resp. $[0,\pi ]-\{\pi /2\}$), i.e. $%
g(N,\xi )=0$ (resp. $g(N,\xi )=c),$ where $c$ is a non-zero constant.
\end{definition}

Suppose that $\tilde{\gamma}(s)$ is parameterized by the arc-length and
denote the Frenet apparatus of $\tilde{\gamma}(s)$ by $(\hat{T},\tilde{N},%
\tilde{B},\tilde{\kappa},\tilde{\tau})$. Then,%
\begin{eqnarray}
\tilde{T}(s) &=&\frac{d\gamma ^{i}}{ds}\frac{\partial }{\partial x^{i}}+%
\frac{dX^{i}}{ds}\frac{\partial }{\partial u^{i}}  \label{6} \\
&=&\frac{d\gamma ^{i}}{ds}(\frac{\partial }{\partial x^{i}})^{h}(\tilde{%
\gamma}(s))+(\frac{dX^{i}}{ds}+\frac{d\gamma ^{j}}{ds}X^{k}\Gamma _{jk}^{i})%
\frac{\partial }{\partial u^{i}}(\tilde{\gamma}(s))  \notag \\
&=&(E^{h}+(\nabla _{E}X)^{t})(\tilde{\gamma}(s)),  \notag
\end{eqnarray}%
where $E=\gamma ^{\prime }(s).$

Let $\theta $ be the angle between $\tilde{T}$ and $\xi _{1}$. From
equations (\ref{3}) and (\ref{4}), we have 
\begin{equation}
\frac{g_{1}(\tilde{T},\xi _{1})}{\left\vert \tilde{T}\right\vert \left\vert
\xi _{1}\right\vert }=\cos \theta =\sqrt{c_{1}+d_{1}}g(E,X).  \label{7}
\end{equation}%
If we differentiate both side of equation (\ref{7}) with respect to $s,$ and
use equations (\ref{4}), (\ref{5}) and (\ref{6}), we have 
\begin{eqnarray*}
\frac{d}{ds}g_{1}(\tilde{T},\xi _{1}) &=&g_{1}(\nabla _{1}\text{ }_{\tilde{T}%
(s)}\tilde{T},\xi _{1})+g_{1}(\tilde{T},\nabla _{1}\text{ }_{\tilde{T}%
(s)}\xi _{1}) \\
&=&\tilde{\kappa}g_{1}(\tilde{N},\xi _{1})+\frac{1}{2\lambda \alpha }g_{1}(%
\tilde{T},\nabla _{1E^{h}}X^{h}+\nabla _{1(\nabla _{E}X)^{t}}X^{h}) \\
&=&\tilde{\kappa}g_{1}(\tilde{N},\xi _{1})+\frac{1}{2\lambda \alpha ^{2}}%
[(c_{1}+d_{1})g(E,\nabla _{E}X)-c_{2}R(E,X,X,\nabla _{E}X)] \\
&=&-\left\vert \xi _{1}\right\vert \theta ^{\prime }\sin \theta .
\end{eqnarray*}%
So,%
\begin{equation}
g_{1}(N,\xi _{1})=\frac{1}{2\lambda \alpha ^{2}\tilde{\kappa}}\left(
c_{2}R(E,X,X,\nabla _{E}X)-(c_{1}+d_{1})g(E,\nabla _{E}X)\right) -\left\vert
\xi _{1}\right\vert \frac{\theta ^{\prime }\sin \theta }{\tilde{\kappa}},
\label{8}
\end{equation}%
where $\theta ^{\prime }=\frac{d\theta }{ds}$ and $\xi _{1}=\frac{1}{%
2\lambda \alpha }X^{h}$.

Let $(T,N)$ be a Frenet frame on $\gamma .$ From equation (\ref{7}), we get
the following 
\begin{equation}
X=\frac{\lambda }{r\sqrt{c_{1}+d_{1}}}\cos \theta T+\beta N,  \label{9}
\end{equation}%
for a smooth function $\beta ,$ where $r=\left\Vert E\right\Vert .$ Since $X$
is a unit vector, we have 
\begin{equation*}
\frac{\lambda ^{2}}{(c_{1}+d_{1})r^{2}}\cos ^{2}\theta +\beta ^{2}=1,
\end{equation*}%
and%
\begin{equation}
\beta =\pm \frac{1}{r}\sqrt{r^{2}-(\frac{\lambda }{\sqrt{c_{1}+d_{1}}}%
)^{2}\cos ^{2}\theta }.  \label{10}
\end{equation}%
Differentiating equation (\ref{9}) with respect to $s$, we derive%
\begin{eqnarray}
\nabla _{E}X &=&\frac{1}{\sqrt{c_{1}+d_{1}}}(\frac{\cos \theta }{r})^{\prime
}T+\frac{\kappa \cos \theta }{\sqrt{c_{1}+d_{1}}}N+\beta ^{\prime }N-r\beta
\kappa T  \label{11} \\
&=&((\frac{\cos \theta }{r\sqrt{c_{1}+d_{1}}})^{\prime }-r\beta \kappa )T+(%
\frac{\kappa \cos \theta }{\sqrt{c_{1}+d_{1}}}+\beta ^{\prime })N.  \notag
\end{eqnarray}%
Equations (\ref{7}) and (\ref{11}), and orthogonality of the vectors $X$ and 
$\nabla _{E}X$ give us 
\begin{equation}
E=\frac{\cos \theta }{\sqrt{c_{1}+d_{1}}}X+\frac{r}{g(\nabla _{E}X,\nabla
_{E}X)}\left( (\frac{\cos \theta }{r\sqrt{c_{1}+d_{1}}})^{\prime }-r\beta
\kappa \right) \nabla _{E}X.  \label{12}
\end{equation}%
Using the last expression, we can write%
\begin{eqnarray}
R(E,X,X,\nabla _{E}X) &=&r\left( (\frac{\cos \theta }{r\sqrt{c_{1}+d_{1}}}%
)^{\prime }-r\beta \kappa \right) \frac{R(\nabla _{E}X,X,X,\nabla _{E}X)}{%
g(\nabla _{E}X,\nabla _{E}X)}  \notag \\
&=&r\left( (\frac{\cos \theta }{r\sqrt{c_{1}+d_{1}}})^{\prime }-r\beta
\kappa \right) K(s),  \label{123}
\end{eqnarray}%
where $K(s)$ is the sectional curvature of $M.$ Putting the equations (\ref%
{10})-(\ref{123}) in (\ref{8}), we state the following equation%
\begin{eqnarray}
&&  \label{13} \\
g_{1}(\tilde{N},\xi _{1}) &=&\frac{r(c_{2}K(s)-(c_{1}+d_{1}))}{2\lambda
\alpha ^{2}\tilde{\kappa}}\left( (\frac{\cos \theta }{r\sqrt{c_{1}+d_{1}}}%
)^{\prime }\pm r\kappa \sqrt{r^{2}-(\frac{\lambda }{\sqrt{c_{1}+d_{1}}}%
)^{2}\cos ^{2}\theta }\right)   \notag \\
&&-\left\vert \xi _{1}\right\vert \frac{\theta ^{\prime }\sin \theta }{%
\tilde{\kappa}}.  \notag
\end{eqnarray}%
Now we can prove the following propositions.

\begin{proposition}
\bigskip Let $T_{1}S^{2}$ be the unit tangent bundle of the unit sphere $%
S^{2}$ with the natural diagonal metric structure given by (\ref{4}) such
that $c_{2}=c_{1}+d_{1}.$ Then all Legendre and slant curves are $\tilde{N}-$%
Legendre curves.
\end{proposition}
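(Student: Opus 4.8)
The plan is to read the conclusion off the master formula (\ref{13}) by showing that, under the stated hypotheses, both terms on its right-hand side vanish. The key preliminary observation is that the Legendre and slant conditions are statements about the angle $\theta$ between $\tilde{T}$ and $\xi_1$. By Definition \ref{def1} together with (\ref{7}), a Legendre curve $\tilde{\gamma}$ satisfies $g_1(\tilde{T},\xi_1)=0$, hence $\cos\theta=0$ and $\theta\equiv\pi/2$; a slant curve satisfies $g_1(\tilde{T},\xi_1)=c$ for a nonzero constant $c$, so by (\ref{7}) $\cos\theta$ is a nonzero constant and $\theta$ is a constant in $[0,\pi]\setminus\{\pi/2\}$. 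In either case $\theta$ is constant along $\tilde{\gamma}$, so $\theta'=0$, and the last term $-\left\vert\xi_1\right\vert\theta'\sin\theta/\tilde{\kappa}$ of (\ref{13}) vanishes identically.

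Next I would use that $M=S^2$ is the unit sphere, which has constant sectional curvature, so $K(s)\equiv 1$. Substituting this together with the hypothesis $c_2=c_1+d_1$ into the scalar coefficient $c_2K(s)-(c_1+d_1)$ appearing in the first term of (\ref{13}) gives $c_2\cdot 1-(c_1+d_1)=c_2-c_2=0$. Consequently the entire first term of (\ref{13}) vanishes, irrespective of the values of $r$, $\kappa$, $\beta$, or the derivative $(\cos\theta/(r\sqrt{c_1+d_1}))'$. Combining the two observations, (\ref{13}) collapses to $g_1(\tilde{N},\xi_1)=0$, which by Definition \ref{def2} is precisely the statement that $\tilde{\gamma}$ is an $\tilde{N}$-Legendre curve. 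Since both the Legendre and the slant hypotheses were used only through the fact $\theta'=0$, the conclusion holds uniformly for all such curves.

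As for the main obstacle: the heavy computation has already been discharged in assembling (\ref{13}) from (\ref{8})--(\ref{123}), so the proof itself is essentially a matter of recognizing which quantities are forced to be constant or zero. The only points requiring genuine care are verifying that \emph{both} the Legendre and the slant conditions translate into $\theta'=0$ — the slant case needs the observation that a constant value of $g_1(\tilde{T},\xi_1)$ yields a constant $\cos\theta$, and hence a constant $\theta$, via (\ref{7}) — and noting that it is exactly the curvature normalization $K\equiv 1$ of the unit sphere that makes the coefficient $c_2K-(c_1+d_1)$ vanish under the hypothesis $c_2=c_1+d_1$. No further estimates or case analysis are needed.
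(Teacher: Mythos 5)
Your proposal is correct and follows exactly the paper's own argument: the hypothesis $c_{2}=c_{1}+d_{1}$ together with $K\equiv 1$ kills the factor $c_{2}K(s)-(c_{1}+d_{1})$ in the first term of (\ref{13}), while the Legendre/slant condition forces $\theta$ to be constant via (\ref{7}), killing the second term. The paper states this very tersely; you have merely spelled out the same two observations in full detail.
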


\begin{proof}
Let $\tilde{\gamma}(s)=(\gamma (s),X(s))$ be a Legendre or a slant curve
with arc-parameter in the contact metric manifold $T_{1}S^{2}$. Since the
sectional curvature of the unit sphere $K$ is equal to $1$, from Definition %
\ref{def1} and equation (\ref{13}) and under the assumption $%
c_{2}=c_{1}+d_{1}$, we get 
\begin{equation*}
g_{1}(\tilde{N},\xi _{1})=0.
\end{equation*}%
This completes the proof.
\end{proof}

\begin{proposition}
\bigskip Let $T_{1}S^{2}$ be the unit tangent bundle of the unit sphere $%
S^{2}$ with the natural diagonal metric structure given by (\ref{4}) such
that $c_{2}=c_{1}+d_{1}$ and let $\tilde{\gamma}$ be a non-slant curve on $%
T_{1}S^{2}.$ Then $\tilde{\gamma}$ is an $\tilde{N}-$slant curve if the
angle $\theta $ satisfies the equation%
\begin{equation*}
\theta =\arccos c\dint \tilde{\kappa},
\end{equation*}%
where $c$ is a non-zero constant.
\end{proposition}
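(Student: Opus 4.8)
The plan is to specialize the master formula (\ref{13}) to the present setting and then read off, and solve, the first-order differential equation that the $\tilde{N}$-slant condition imposes on the angle $\theta$. First I would note that for $\tilde{\gamma}$ on $T_{1}S^{2}$ the sectional curvature is $K(s)=1$, so the factor $c_{2}K(s)-(c_{1}+d_{1})$ appearing in the first summand of (\ref{13}) becomes $c_{2}-(c_{1}+d_{1})$. Under the standing hypothesis $c_{2}=c_{1}+d_{1}$ this vanishes identically, so the whole first term of (\ref{13}) drops out and one is left with
\begin{equation*}
g_{1}(\tilde{N},\xi _{1})=-\left\vert \xi _{1}\right\vert \frac{\theta ^{\prime }\sin \theta }{\tilde{\kappa}}.
\end{equation*}

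Next I would record that $\left\vert \xi _{1}\right\vert =1$. Since $\xi _{1}=\frac{1}{2\lambda \alpha }u^{h}$ by (\ref{4}) and $g_{1}=\alpha g_{1}^{d}$, a short computation from (\ref{3}) using $\alpha =\frac{c_{1}+d_{1}}{4\lambda ^{2}}$ together with $g(u,u)=1$ on $T_{1}S^{2}$ gives $g_{1}(\xi _{1},\xi _{1})=1$. Hence the displayed identity reduces to $g_{1}(\tilde{N},\xi _{1})=-\theta ^{\prime }\sin \theta /\tilde{\kappa}$. (In any case $\left\vert \xi _{1}\right\vert $ is a constant, so even without this normalization it would only be absorbed into the free constant below.)

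To conclude, I would impose the $\tilde{N}$-slant condition of Definition \ref{def2}, requiring $g_{1}(\tilde{N},\xi _{1})$ to equal a non-zero constant $c$. By the reduced identity this reads $-\theta ^{\prime }\sin \theta /\tilde{\kappa}=c$, i.e. $\frac{d}{ds}\cos \theta =c\tilde{\kappa}$ after using $\frac{d}{ds}\cos \theta =-\theta ^{\prime }\sin \theta $. Conversely, differentiating the stated relation $\cos \theta =c\dint \tilde{\kappa}$ recovers exactly this equation and, through the reduced identity, yields $g_{1}(\tilde{N},\xi _{1})=c$; this constancy is precisely what makes $\tilde{\gamma}$ an $\tilde{N}$-slant curve. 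Thus $\theta =\arccos c\dint \tilde{\kappa}$ is the integrated form of the $\tilde{N}$-slant equation, which is the claimed characterization.

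I expect the only genuinely delicate point to be the role of the non-slant hypothesis rather than any computation: it is exactly the assumption that $\theta $ is non-constant (so $\theta ^{\prime }\neq 0$) that permits $c$ to be a \emph{non-zero} constant. Were $\theta $ constant, as in the slant case, one would fall back into the situation of the preceding proposition with $g_{1}(\tilde{N},\xi _{1})=0$, producing an $\tilde{N}$-Legendre rather than an $\tilde{N}$-slant curve. Everything else is a direct specialization of (\ref{13}) followed by an elementary integration.
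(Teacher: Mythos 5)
Your proof is correct and follows essentially the same route as the paper's: specialize equation (\ref{13}) with $K=1$ and $c_{2}=c_{1}+d_{1}$ so that only the term $-\left\vert \xi _{1}\right\vert \theta ^{\prime }\sin \theta /\tilde{\kappa}$ survives, then impose $g_{1}(\tilde{N},\xi _{1})=c$ and integrate $(\cos \theta )^{\prime }=c\tilde{\kappa}$. Your explicit verification that $\left\vert \xi _{1}\right\vert =1$ and your converse (differentiation) direction---which is in fact the implication the ``if'' formulation of the statement requires---only make the argument slightly more complete than the paper's own.
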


\begin{proof}
Let $\tilde{\gamma}(s)=(\gamma (s),X(s))$ be a non-slant curve with
arc-parameter in the contact metric manifold $T_{1}S^{2}$. Since the
sectional curvature of the unit sphere $K$ is equal to $1$, under the
assumption $c_{2}=c_{1}+d_{1},\ $Definition \ref{def1} and equation (\ref{13}%
) give us 
\begin{equation*}
g_{1}(N,\xi _{1})=-\frac{\theta ^{\prime }\sin \theta }{\tilde{\kappa}}=c%
\text{ \ }constant.
\end{equation*}%
So, 
\begin{equation*}
g_{1}(N,\xi _{1})=(\cos \theta )^{\prime }=c\tilde{\kappa}.\text{ }
\end{equation*}%
By solving the last differential equation, we get 
\begin{equation*}
\theta =\arccos c\dint \tilde{\kappa},
\end{equation*}%
which completes the proof.
\end{proof}

\begin{proposition}
\label{Prop}Let $M$ be a non-unit sphere whose constant sectional curvature
is $K=a_{1}^{\text{ }2}$. Suppose that $\tilde{\gamma}(s)=(\gamma (s),X(s))$
is a slant curve in $(T_{1}M,g_{1},\varphi _{1},\xi _{1},\eta _{1})$ such
that $c_{2}=c_{1}+d_{1}$ and $\gamma $ is a curve with constant velocity $%
r_{0}$. If the torsion $\tilde{\tau}$ of $\tilde{\gamma}$ equals to
sectional curvature $K$ of $M,$ then $\tilde{\gamma}$ is an $\tilde{N}-$%
Legendre (resp. $\tilde{N}-$slant) curve if and only if $\gamma $ is a
geodesic (resp. has a non-zero constant curvature $\kappa $).
\end{proposition}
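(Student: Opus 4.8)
The plan is to substitute the four standing hypotheses into the master formula \eqref{13} and read off the result. Being slant means, by Definition \ref{def1} and \eqref{7}, that $\cos\theta=\sqrt{c_1+d_1}\,g(E,X)$ is a nonzero constant, so $\theta=\theta_0$ is constant and $\theta'=0$; this kills the final term $-|\xi_1|\theta'\sin\theta/\tilde\kappa$ of \eqref{13}. Since moreover $\gamma$ has constant speed $r=r_0$, the quantity $\cos\theta/(r\sqrt{c_1+d_1})$ is constant and its derivative drops out of the bracket. With $c_2=c_1+d_1$ the curvature coefficient collapses to $c_2K-(c_1+d_1)=(c_1+d_1)(K-1)$, a nonzero constant because $K=a_1^{\,2}\neq 1$ for a non-unit sphere, while the surviving radical $\sqrt{r_0^2-(\lambda/\sqrt{c_1+d_1})^2\cos^2\theta_0}=r_0|\beta_0|$ is a nonzero constant by \eqref{10} (the condition $\beta_0\neq0$ just says $X$ is not tangent to $\gamma$). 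Collecting constants, \eqref{13} reduces to
\[
g_1(\tilde N,\xi_1)=\pm A\,\frac{\kappa}{\tilde\kappa},\qquad
A=\frac{r_0^{\,3}(c_1+d_1)(K-1)\,|\beta_0|}{2\lambda\alpha^2}\neq0 .
\]

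The heart of the argument is to show that $\tilde\kappa$ is constant, and this is where the hypothesis $\tilde\tau=K$ is used. I would compute the Frenet curvature and torsion of $\tilde\gamma$ directly from the connection formulas \eqref{5}, using $\tilde T=E^h+(\nabla_E X)^t$ together with the constant-curvature identity $R(W,Z)V=K\big(g(Z,V)W-g(W,V)Z\big)$. Slant and constant speed make $g(E,X)$, $|\nabla_E X|$ and the surface Frenet data of $\gamma$ constant, and the arc-length normalization $|\tilde T|_{g_1}=1$, i.e. $\alpha\big[c_1r_0^2+d_1g(E,X)^2+c_2|\nabla_E X|^2\big]=1$, pins them down; consequently $\nabla_{1\,\tilde T}\tilde T$ has constant coefficients in the (constant-length) frame $T^h,N^h,T^t,N^t$. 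Feeding these into the second Frenet equation $\nabla_{1\,\tilde T}\tilde N=-\tilde\kappa\tilde T+\tilde\tau\tilde B$ and imposing $\tilde\tau=K$ then forces $\tilde\kappa'=0$. This explicit evaluation of $\tilde\tau$ from \eqref{5} is the main obstacle: it is the only lengthy computation, and the cancellations that turn $\tilde\tau=K$ into $\tilde\kappa=\mathrm{const}$ depend on the normalization $\alpha=(c_1+d_1)/4\lambda^2$ and on $c_2=c_1+d_1$.

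With $\tilde\kappa$ a nonzero constant the proposition is immediate from the displayed relation, since then $g_1(\tilde N,\xi_1)$ is a fixed nonzero multiple of $\kappa$. For the Legendre/geodesic equivalence, $\tilde\gamma$ is $\tilde N$-Legendre iff $g_1(\tilde N,\xi_1)=0$, iff $\kappa=0$ (because $A\neq0$ and $\tilde\kappa\neq0$), i.e. iff $\gamma$ is a geodesic. For the slant equivalence, $\tilde\gamma$ is $\tilde N$-slant iff $g_1(\tilde N,\xi_1)=c$ is a nonzero constant, iff $\kappa=\mp c\,\tilde\kappa/A$ is a nonzero constant, i.e. iff $\gamma$ has nonzero constant curvature; both equivalences run in both directions because $A$ and $\tilde\kappa$ are fixed nonzero constants.
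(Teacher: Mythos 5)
Your reduction of (\ref{13}) under the standing hypotheses is correct and matches the paper's: slantness kills $\theta'$, constant speed kills the derivative term, $c_2=c_1+d_1$ collapses the curvature coefficient, and (\ref{10}) identifies the radical with $r_0|\beta_0|$, leaving $g_1(\tilde N,\xi_1)=\pm A\,\kappa/\tilde\kappa$ with $A$ a constant multiple of $(K-1)$. But the step you yourself call ``the main obstacle'' --- showing that $\tilde\kappa$ is constant, equivalently that $(K-1)\kappa/\tilde\kappa$ is a constant multiple of $\kappa$ --- is left as an unexecuted plan. You say you \emph{would} compute the Frenet apparatus of $\tilde\gamma$ from (\ref{5}) and that imposing $\tilde\tau=K$ \emph{would} force $\tilde\kappa'=0$, but you exhibit neither the computation nor any reason the advertised cancellations occur. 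That assertion is the entire content of the proposition: without it, $g_1(\tilde N,\xi_1)=\pm A\,\kappa/\tilde\kappa$ yields neither direction of either equivalence, since $\tilde\kappa$ could a priori vary so as to make the ratio constant with $\kappa$ nonconstant, or vice versa. So the proposal has a genuine gap.

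The idea you are missing is that no such computation is needed, and it is signaled by the hypothesis $K=a_1^{\,2}$, which your argument never uses. As recalled at the end of Section 2, $K=a_1^{\,2}$ is precisely the condition for $(T_1M,\varphi_1,\xi_1,\eta_1,g_1)$ to be Sasakian. The paper then invokes the theorem of Cho--Inoguchi--Lee \cite{Cho}: in a Sasakian 3-manifold, a curve is slant if and only if $(\tilde\tau\pm1)/\tilde\kappa$ is a nonzero constant. Since $\tilde\gamma$ is slant by hypothesis, $(\tilde\tau-1)/\tilde\kappa$ is a nonzero constant, and substituting $\tilde\tau=K$ makes the problematic factor $(K-1)/\tilde\kappa$ in your formula a nonzero constant for free; this gives $g_1(\tilde N,\xi_1)=\bar c\,\kappa$ with $\bar c$ constant (and, as a byproduct, confirms your intuition that $\tilde\kappa$ is constant, since $K$ is). After that, your final paragraph --- $\tilde N$-Legendre iff $\kappa=0$, $\tilde N$-slant iff $\kappa$ is a nonzero constant --- goes through verbatim. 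So the skeleton of your argument is the paper's, but the load-bearing step must be supplied by this citation (or by actually carrying out, in full, the Frenet computation you only sketch).
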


\begin{proof}
Let $M$ be a sphere with constant sectional curvature is $K(s)=a_{1}^{\text{ 
}2}$ and let $c_{2}=c_{1}+d_{1}$. If the curve $\tilde{\gamma}(s)=(\gamma
(s),X(s))$ is a slant curve in $(T_{1}M,g_{1},\varphi _{1},\xi _{1},\eta
_{1})$ where $\gamma $ has constant velocity of $r_{0},$ then from (\ref{13}%
) we have 
\begin{equation*}
g_{1}(N,\xi _{1})=\frac{r_{0}c_{2}(K(s)-1)}{2\lambda \alpha ^{2}\tilde{\kappa%
}}\left( \pm \kappa r_{0}\sqrt{r_{0}^{2}-(\frac{\lambda }{\sqrt{c_{1}+d_{1}}}%
)^{2}\cos ^{2}\theta }\right) .
\end{equation*}%
We know that in a Sasakian 3-manifold, a curve $\sigma $ is slant if and
only if $(\tau _{\sigma }\pm 1)/\kappa _{\sigma }$ is a non-zero constant,
where $\tau _{\sigma }$ and $\kappa _{\sigma }$ are torsion and curvature of 
$\sigma $ respectively (see \cite{Cho}). If we assume that $\tilde{\tau}=K,$
from the above equation, we have%
\begin{equation*}
g_{1}(N,\xi _{1})=\frac{(\tilde{\tau}-1)}{\tilde{\kappa}}\frac{r_{0}c_{2}}{%
2\lambda \alpha ^{2}}\left( \pm r_{0}\sqrt{r_{0}^{2}-(\frac{\lambda }{\sqrt{%
c_{1}+d_{1}}})^{2}\cos ^{2}\theta }\right) \kappa =\bar{c}\kappa ,
\end{equation*}%
where $\bar{c}$ is a constant. So, it is clear that $\tilde{\gamma}$ is an $%
\tilde{N}-$Legendre (resp. $\tilde{N}-$slant) curve if and only if $\kappa =0
$ (resp. non-zero constant). This ends the proof.
\end{proof}

\begin{example}
Let $S^{2}$ be a non-unit sphere with radius $R.$ In this case, the
sectional curvature (Gaussian curvature) of $S^{2}$ equals to $\frac{1}{R^{2}%
}.\ $Under the assumptions in Proposition \ref{Prop}, the projection curves $%
\gamma $ of all slant and $\tilde{N}-$slant curves $\tilde{\gamma}$ in $%
T_{1}S^{2}$ are circles in $S^{2}$ when their Frenet apparatus are $(\tilde{T%
},\tilde{N},\tilde{B},\tilde{\kappa},\tilde{\tau}=\frac{1}{R^{2}}).$
\end{example}

\begin{proposition}
\label{Prop2}\bigskip Let $M$ be a non-unit sphere and $T_{1}M$ be the unit
tangent bundle of $M$ with the natural diagonal metric structure given by (%
\ref{4}) such that $c_{2}=c_{1}+d_{1}$.\ Suppose that $\tilde{\gamma}%
(s)=(\gamma (s),X(s))$ is a slant curve on $T_{1}M$ and $\gamma $ is a curve
with constant velocity $r_{0}$. Then the curve $\tilde{\gamma}$ is $\tilde{N}%
-$slant if and only if 
\begin{equation*}
\frac{(K-1)\kappa }{\widetilde{\kappa }}
\end{equation*}%
is a non-zero constant.
\end{proposition}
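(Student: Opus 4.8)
The plan is to specialize the master equation (\ref{13}) to the present hypotheses and then read off the claimed equivalence directly from Definition \ref{def2}, exactly as in the first half of the proof of Proposition \ref{Prop} but without ever invoking the extra condition $\tilde{\tau}=K$.

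First I would exploit the slant hypothesis on $\tilde{\gamma}$. By Definition \ref{def1} together with (\ref{7}), the angle $\theta$ between $\tilde{T}$ and $\xi_1$ is constant, so $\theta'=0$ and $\cos\theta$ is constant; this immediately annihilates the last summand $-|\xi_1|\theta'\sin\theta/\tilde{\kappa}$ of (\ref{13}). Next I would use that $\gamma$ has constant velocity, i.e. $r=r_0$ is constant; combined with the constancy of $\cos\theta$ this forces $(\cos\theta/(r\sqrt{c_1+d_1}))'=0$, so inside the large parenthesis of (\ref{13}) only the term $\pm r_0\kappa\sqrt{r_0^2-(\lambda/\sqrt{c_1+d_1})^2\cos^2\theta}$ survives. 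Finally, the assumption $c_2=c_1+d_1$ collapses the curvature factor to $c_2K-(c_1+d_1)=c_2(K-1)$ (recall that for a sphere $K$ is the constant sectional curvature). Collecting the surviving factors, (\ref{13}) reduces to
\begin{equation*}
g_1(\tilde{N},\xi_1)=\bar{c}\,\frac{(K-1)\kappa}{\tilde{\kappa}},\qquad
\bar{c}=\frac{r_0^2 c_2}{2\lambda\alpha^2}\left(\pm\sqrt{r_0^2-\Big(\frac{\lambda}{\sqrt{c_1+d_1}}\Big)^2\cos^2\theta}\right),
\end{equation*}
where every quantity entering $\bar{c}$ (namely $r_0$, $c_2$, $\lambda$, $\alpha$, $\theta$ and $c_1+d_1$) is constant, so that $\bar{c}$ is a fixed constant. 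Note that, unlike the grouping used in Proposition \ref{Prop}, I deliberately keep $(K-1)\kappa/\tilde{\kappa}$ as the single varying quantity and absorb only genuinely constant factors into $\bar{c}$.

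With this identity in hand the conclusion is immediate: by Definition \ref{def2}, $\tilde{\gamma}$ is $\tilde{N}$-slant precisely when $g_1(\tilde{N},\xi_1)$ is a non-zero constant, and since $g_1(\tilde{N},\xi_1)$ differs from $(K-1)\kappa/\tilde{\kappa}$ only by the fixed factor $\bar{c}$, this holds if and only if $(K-1)\kappa/\tilde{\kappa}$ is a non-zero constant. The one point requiring care — and the only genuine obstacle — is verifying that the prefactor $\bar{c}$ is itself non-zero, so that multiplication by $\bar{c}$ is a bijection on the non-zero constants and the equivalence is not vacuous. This amounts to checking that $r_0^2>(\lambda/\sqrt{c_1+d_1})^2\cos^2\theta$, equivalently that the coefficient $\beta$ in (\ref{10}) does not vanish, which is exactly the non-degeneracy condition ensuring that $X$ has a genuine normal component along $\gamma$; I would record this as a standing assumption on the slant curve.
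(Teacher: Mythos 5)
Your proof is correct and follows essentially the same route as the paper's own proof: specialize equation (\ref{13}) using $\theta'=0$ (from the slant hypothesis), constant velocity $r_{0}$, and $c_{2}=c_{1}+d_{1}$ to reduce it to $g_{1}(\tilde{N},\xi_{1})=\bar{c}\,(K-1)\kappa/\tilde{\kappa}$ with $\bar{c}$ constant, then conclude via Definition \ref{def2}. Your additional check that $\bar{c}\neq 0$ (equivalently that $\beta$ in (\ref{10}) does not vanish, i.e. $X$ is not tangent to $\gamma$) is a point the paper asserts without justification, so recording it as a standing non-degeneracy assumption is a small but genuine improvement rather than a different approach.
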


\begin{proof}
Let $M$ be a non-unit sphere $(K\neq 1)$. Assume that the curve $\tilde{%
\gamma}(s)=(\gamma (s),X(s))$ is a slant curve in $(T_{1}M,g_{1},\varphi
_{1},\xi _{1},\eta _{1})$ such that $c_{2}=c_{1}+d_{1},$ where $\gamma $ has
constant velocity $r_{0}.\ $Then from (\ref{13}) we get%
\begin{equation*}
g_{1}(N,\xi _{1})=\pm \frac{(K-1)}{\tilde{\kappa}}\frac{r_{0}c_{2}}{2\lambda
\alpha ^{2}}\left( \pm r_{0}\sqrt{r_{0}^{2}-(\frac{\lambda }{\sqrt{%
c_{1}+d_{1}}})^{2}\cos ^{2}\theta }\right) \kappa =\overline{c}\frac{%
(K-1)\kappa }{\tilde{\kappa}},
\end{equation*}%
where $\overline{c}$ is a non-zero constant. The proof follows from the
definition \ref{def2}.
\end{proof}

\begin{example}
Let $\tilde{\gamma}$ be an arbitrary slant curve in $T_{1}%
\mathbb{R}
^{2}$ and its projection curve $\gamma $ be a geodesic in $%
\mathbb{R}
^{2}.\ $Then under the assumptions in Proposition \ref{Prop2}, $\tilde{\gamma%
}$ is an $\tilde{N}-$Legendre curve. Clearly, if $\gamma $ is not geodesic,
then $\tilde{\gamma}$ is an $\tilde{N}-$slant curve if and only if $\frac{%
\kappa }{\widetilde{\kappa }}$ is a non-zero constant.
\end{example}

\begin{proposition}
Let $M$ be a non-unit sphere and $T_{1}M$ be the unit tangent bundle of $M$
with the natural diagonal metric structure given by (\ref{4}). Suppose that $%
\tilde{\gamma}(s)=(\gamma (s),X(s))$ is a non-slant curve in $T_{1}M$ and $%
\gamma $ is a curve with constant velocity $\frac{2\lambda }{c_{1}+d_{1}}$.
If the angle $\theta $ is linear (i.e. $\theta =es+f,$ $e$ and $f$ are
constants), then \newline
(1) $\tilde{\gamma}(s)$ is a $\tilde{N}-$Legendre curve if and only if 
\begin{equation}
\frac{c_{2}(K-1)}{2\alpha ^{2}\left\vert \xi _{1}\right\vert \sqrt{%
c_{1}+d_{1}}}(-\frac{e}{\lambda }\pm \frac{\kappa \lambda }{c_{1}+d_{1}})=e.
\label{14}
\end{equation}%
(2) $\tilde{\gamma}(s)$ is a $\tilde{N}-$slant curve if and only if%
\begin{equation}
\theta =\arcsin (\frac{\overline{c}\tilde{\kappa}}{-e(\frac{c_{2}(K-1)}{%
2\alpha ^{2}\lambda \sqrt{c_{1}+d_{1}}}+1)\pm \frac{\kappa \lambda c_{2}(K-1)%
}{2\alpha ^{2}(c_{1}+d_{1})^{3/2}})}).  \label{15}
\end{equation}%
where $\bar{c}$ is a non-zero constant.
\end{proposition}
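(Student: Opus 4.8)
The plan is to specialize the master formula (\ref{13}) under the two simplifying hypotheses of this proposition---namely that $\gamma$ has constant velocity $r_0 = \frac{2\lambda}{c_1+d_1}$ and that the angle $\theta$ is linear, $\theta = es+f$---and then read off the $\tilde N$-Legendre and $\tilde N$-slant conditions directly from Definition \ref{def2}. First I would note that the choice $r_0 = \frac{2\lambda}{c_1+d_1}$ is engineered so that $\left(\frac{\lambda}{\sqrt{c_1+d_1}}\right)^2 = \frac{r_0^2(c_1+d_1)}{4}$, which should simplify the radical $\sqrt{r_0^2 - \left(\frac{\lambda}{\sqrt{c_1+d_1}}\right)^2\cos^2\theta}$ appearing in (\ref{13}) into a multiple of $\sqrt{c_1+d_1}$ times a clean expression; the constant velocity also means $r = r_0$ is constant, so the term $\left(\frac{\cos\theta}{r\sqrt{c_1+d_1}}\right)'$ collapses to $\frac{1}{r_0\sqrt{c_1+d_1}}(\cos\theta)' = -\frac{e\sin\theta}{r_0\sqrt{c_1+d_1}}$ using $\theta' = e$.

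Next I would substitute these simplifications into (\ref{13}), keeping in mind that the sectional curvature is $K$ (constant for a sphere). The term $\frac{r(c_2K(s)-(c_1+d_1))}{2\lambda\alpha^2\tilde\kappa}$ should, under $c_2 = c_1+d_1$ (which I would assume is again in force here, consistent with the surrounding propositions, since the factor $c_2(K-1)$ appears in the claimed formulas (\ref{14}) and (\ref{15})), become $\frac{r_0 c_2(K-1)}{2\lambda\alpha^2\tilde\kappa}$. Collecting everything, the expression $g_1(\tilde N,\xi_1)$ should take the form of a curvature-dependent term plus the universal $-|\xi_1|\frac{\theta'\sin\theta}{\tilde\kappa} = -|\xi_1|\frac{e\sin\theta}{\tilde\kappa}$ term. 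For part (1), setting $g_1(\tilde N,\xi_1)=0$ (the $\tilde N$-Legendre condition) and observing that the common factor $\frac{\sin\theta}{\tilde\kappa}$ can be cancelled (away from isolated zeros of $\sin\theta$), I expect the two remaining bracketed pieces---one carrying $\frac{e}{\lambda}$, the other carrying $\pm\frac{\kappa\lambda}{c_1+d_1}$---to balance against the $e$ on the right-hand side, yielding exactly (\ref{14}) after identifying $\alpha = \frac{c_1+d_1}{4\lambda^2}$ and $|\xi_1|$ from (\ref{4}).

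For part (2), instead of setting the expression to zero I would set $g_1(\tilde N,\xi_1)$ equal to a nonzero constant $\bar c$ as demanded by the $\tilde N$-slant condition in Definition \ref{def2}. After factoring out $\frac{\sin\theta}{\tilde\kappa}$ from both curvature terms, the equation should rearrange into $\bar c\,\tilde\kappa = \sin\theta \cdot \big(\text{coefficient}\big)$, where the coefficient aggregates $-e\left(\frac{c_2(K-1)}{2\alpha^2\lambda\sqrt{c_1+d_1}}+1\right)$ and the $\pm\frac{\kappa\lambda c_2(K-1)}{2\alpha^2(c_1+d_1)^{3/2}}$ piece; solving for $\sin\theta$ and applying $\arcsin$ gives (\ref{15}). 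The main obstacle I anticipate is purely bookkeeping: carefully tracking the constant factors $\lambda$, $\alpha$, $|\xi_1|$, $\sqrt{c_1+d_1}$ and the sign choice $\pm$ through the radical simplification so that the $\frac{1}{2\alpha^2}$, $\frac{\lambda}{(c_1+d_1)^{3/2}}$ and $\frac{1}{\lambda\sqrt{c_1+d_1}}$ normalizations in (\ref{14}) and (\ref{15}) come out exactly as stated. A secondary subtlety is justifying the cancellation of $\sin\theta$, which requires $\theta$ to be generic (non-Legendre, consistent with the non-slant hypothesis), and being explicit that the ``if and only if'' in each part is read at the level of the reduced algebraic identity rather than pointwise where $\sin\theta$ vanishes.
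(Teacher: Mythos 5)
Your proposal is correct and follows essentially the same route as the paper's own proof: substitute $\theta'=e$, constant velocity, and constant curvature $K$ (together with the implicit assumption $c_{2}=c_{1}+d_{1}$, which you rightly flag and which the paper also uses silently, since its intermediate formula already carries the factor $c_{2}(K-1)$) into equation (\ref{13}), then obtain (\ref{14}) by setting $g_{1}(\tilde{N},\xi _{1})=0$ and cancelling the common factor $\sin \theta /\tilde{\kappa}$, and (\ref{15}) by setting $g_{1}(\tilde{N},\xi _{1})=\bar{c}$ and solving for $\sin \theta $. The caveats you raise (justifying the $\sin \theta $ cancellation and the constant-tracking through the radical) are exactly the points the paper compresses into ``direct computations,'' so nothing essential is missing from your argument.
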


\begin{proof}
(1) If the angle $\theta $ is linear, under the assumptions and from
equation (\ref{13}), we have 
\begin{equation*}
g_{1}(\tilde{N},\xi _{1})=\frac{c_{2}(K-1)}{2\alpha ^{2}\sqrt{c_{1}+d_{1}}%
\tilde{\kappa}}(-\frac{e\sin \theta }{\lambda }\pm \frac{\kappa \lambda }{%
c_{1}+d_{1}}\sin \theta )-\frac{e\sin \theta }{\tilde{\kappa}}\left\vert \xi
_{1}\right\vert .
\end{equation*}%
The proof follows from the condition $g_{1}(\tilde{N},\xi _{1})=0$ and
direct computations.\newline

(2) The above equation, the $\tilde{N}-$slant condition $g_{1}(\tilde{N},\xi
_{1})=c$ and direct computations give the proof.
\end{proof}

\end{document}